\theoremstyle{plain}
\newtheorem{theorem}{Theorem}
\theoremstyle{definition}
\newtheorem{definition}[theorem]{Definition}
\newtheorem{remark}[theorem]{Remark}
\newtheorem{question}[theorem]{Question}
\newcommand{\Nu}{{\mathcal N}}
\newcommand{\Z}{{\mathbb Z}}
\newcommand{\Lo}{{\mathcal L}}
\def\Inv{\text{\rm inv}}
\def\li{\text{\rm Li}}
\def\stirling#1#2{\genfrac{\{}{\}}{0pt}{}{#1}{#2}}
\title[On $q$-poly-Bernoulli numbers arising from combinatorial interpretations]{On $q$-poly-Bernoulli numbers arising from combinatorial interpretations}
\author{Be\'ata B\'enyi}
\address{\noindent Faculty of Water Sciences, National University of Public Service, Budapest, HUNGARY}
\email{beata.benyi@gmail.com}
\author{Jos\'e L. Ram\'{\i}rez}
\address{\noindent Departamento de Matem\'aticas, Universidad Nacional de Colombia, Bogot\'a,  COLOMBIA}
\email{jlramirezr@unal.edu.co}
\date{\today}
\subjclass[2010]{05A05, 05A19}
\keywords{poly-Bernoulli number, $q$-analogue, combinatorial interpretation.}
\begin{document}
\begin{abstract}
In this paper we present several natural $q$-analogues of the poly-Bernoulli numbers arising in combinatorial contexts. We also recall some relating analytical results and ask for combinatorial interpretations.
\end{abstract}

\maketitle

\section{Introduction}
Poly-Bernoulli numbers were introduced by Kaneko in 1997 \cite{Kaneko} as he noticed that the generating function of the usual Bernoulli numbers  can be generalized in a nice way using the polylogarithm function.  Later, in 1999  Arakawa and Kaneko  \cite{AraKan} observed that  poly-Bernoulli numbers can be expressed as special values of multiple zeta values.  Motivated by this observation, they introduced a function, nowadays known as the Arakawa-Kaneko function, which expresses special values of this function at negative integers with the help of poly-Bernoulli numbers and multiple zeta values.  Since then poly-Bernoulli numbers were studied by numerous authors.

The literature in this topic is so wealth that we can not give here a complete list about all the areas that was motivated by these works, however, we mention a few. Generalizations were introduced, as for instance multi-poly-Bernoulli numbers \cite{Bayad} or poly-Bernoulli numbers associated with a Dirichlet character \cite{Bayad2}. Analogue numbers were also introduced as poly-Euler \cite{Ohno}, poly-Cauchy \cite{Komatsu}, poly-Eulerian numbers \cite{Son} were introduced. Beyond the numbers, polynomials (and generalizations of the polynomials) were defined and studied. In another direction, using the formula of poly-Bernoulli numbers, that involves the Stirling numbers of the second kind, the authors replaced in the formula variations of the Stirling numbers and studied the so obtained number sequences and polynomials, respectively. For example, the classical Stirling numbers are replaced by the incomplete Stirling numbers \cite{BenRam,KomLipMez}  or the $r$-Stirling numbers \cite{BenRam2,KomRam,}.

Poly-Bernoulli numbers received attention by combinatorialists because poly-Bernoulli numbers with negative $k$ indices enumerate various combinatorial objects and have very interesting combinatorial properties. We find also generalizations in the literature that are natural from the combinatorial point of view. One of the first combinatorial interpretations was given by Brewbaker \cite{Brewbaker} in terms of lonesum matrices.

In combinatorics, number theory, and theory of special polynomials the so called $q$-analogues of number sequences are often investigated. In combinatorial context $q$ is usually used for a parameter of the underlying combinatorial object which are enumerated by the number itself if the extra parameter is not taking into account (so for $q=1$). In this paper we focus on the aspects of $q$-analogues of the poly-Bernoulli numbers. There are different ways to attack this problem. The aim of this paper is to show some of these directions. Most of all we want to highlight the combinatorial richness of the theory of poly-Bernoulli numbers by defining several $q$-analogues of this counting sequence that arise naturally from the combinatorial interpretations.
We also point out a $q$-analogue which was defined analytically and pose the question of finding an appropriate combinatorial interpretation. Further, we also define a $q$-analogue that arise from the formula naturally, and show the strong connection to the Akiyama-Tanigawa algorithm.
Unfortunately, in this case we neither found yet any nice combinatorial interpretation.
We illustrated our results with examples in order to facilitate the reading of the paper and to show the wealth of the theory.

The outline of the paper is as follows. First, we recall some necessary definitions and notation we use throughout the paper. The definitions that are specific for the considered $q$-analogue in a certain section is given in the section itself. In Section 3, we give a natural combinatorial interpretation of the $q$-poly-Bernoulli numbers by using ordered partitions and the inversion statistic.  In Section 4, we use the weight defined by Cigler in his study. We interpret this weight on the set of lonesum matrices to give a new $q$-analogue. In Section 5, we exploit the interpretation of the poly-Bernoulli numbers in the context  of  Vesztergombi permutations. Then, by using the $q$-rook numbers  studied by Garsia and Remmel,  we give a combinatorial identity to find these new polynomials. In Section 6, we reveal a connection between the poly-Bernoulli numbers and the theory of PASEP, then we introduce a new possible $q$-analogue in this context. In Section 7, we show some possible relation between the $q$-analogue defined by  Cenkci and Komatsu and the $\Gamma-$free matrices.  Finally, we use the Akiyama-Tanigawa algorithm to give a non-combinatorial definition of $q$-poly-Bernoulli numbers.

\section{Definition and notation}

The \emph{poly--Bernoulli numbers},  denoted by $B_{n}^{(k)}$, where $n$ is a positive integer and $k$ is an
integer, are defined by the following exponential generating function
\begin{equation}\label{gefunpoly}
\sum_{n=0}^{\infty}B_n^{(k)}\frac{x^n}{n!} =\frac{\li_k(1-e^{-x})}{1-e^{-x}},
\end{equation} where
\[
\li_k(z) =\sum_{i=1}^{\infty}\frac{z^i}{i^k}
\]
is the $k$-th polylogarithm function. Note that for $k=1$ we recover the classical Bernoulli numbers, that is,  $B_n^{(1)}=(-1)^nB_n$, for $n\geq 0$, where $B_n$ denotes the $n$-th Bernoulli number.

From the combinatorial point of view the array with negative $k$ indices are interesting, since these numbers are integers (see sequence  A099594 in \cite{OEIS}).  The first few values of this array are 
$$\
\begin{array}{c|cccccc}
k$\textbackslash$ n & 0 & 1 & 2 & 3& 4& 5\\ \hline
0 & 1 & 1 & 1 & 1 & 1 & 1 \\
-1 & 1 & 2 & 4 & 8 & 16 & 32\\
-2 & 1 & 4 & 14 & 46 & 146 & 454\\
-3 & 1 & 8 & 46 & 230 & 1066 & 4718\\
-4 & 1 & 16 & 146 & 1066 & 6902 & 41506\\
-5 & 1 & 32 & 454 & 4718 & 41506 & 329462\\
\end{array}
$$
Several explicit formulas for the poly-Bernoulli numbers are known:
\begin{align}
B_{n}^{(k)}&=(-1)^n\sum_{m=0}^n(-1)^m\frac{ m!\stirling{n}{m}}{(m+1)^k},  \label{eq:pBszita}\\
B_{n}^{(-k)}&= \sum_{m=0}^{\min(n,k)}m!\stirling{n+1}{m+1}m!\stirling{k+1}{m+1}, \label{eq:pBcomb}\\
B_{n}^{(-k-1)}&= B_{n}^{(-k)} + \sum_{m=1}^n\binom{n}{m}B_{n-(m-1)}^{(-k)}. \label{eq:rec}
\end{align}
The first combinatorial interpretation of poly-Bernoulli numbers are \emph{lonesum matrices} \cite{Brewbaker}.  Lonesum matrices are binary matrices that are uniquely reconstructible from their column and row sum vectors. They can be characterized by forbiddance of the submatrices of the form
$$\begin{pmatrix} 0 & 1 \\ 1 & 0 \end{pmatrix} \quad \text{or} \quad \begin{pmatrix} 1 & 0 \\ 0 & 1 \end{pmatrix}.$$

Similarly, there are three other pairs of $2\times 2$ submatrices such that their forbiddance leads to a class of $(0,1)$-matrices enumerated by the poly-Bernoulli numbers,  see for example \cite{BH1, BH}. The formula \eqref{eq:pBcomb} enumerates pairs of ordered partitions, which is an obvious interpretation and can be turned immediately to a so called \emph{Callan permutation} \cite{BH1}. Another permutation class which we call \emph{Vesztergombi permutations} was found early in an algebraic study \cite{Launois}. In this paper we will only refer to these interpretations though there are also graph theoretical applications (cf. \cite{BenRam2}), for instance.

In the formulas \eqref{eq:pBszita} and \eqref{eq:pBcomb} we see that the Stirling numbers of the second kind plays a key role. Stirling numbers of the second kind, $\stirling{n}{k}$,  are defined as the number of set partitions of an $n$-element set into $k$ non-empty blocks.  Stirling numbers of the second kind have several $q$-analogues introduced in the literature (cf \cite{Mansour1, Mansour2}). In this paper we use different definitions, appropriate for our purposes of the specific sections. We recall the particular $q$-analogue in the section itself.

As it is usual in the theory of $q$-calculus
\begin{align*}
[n]_q:=1+q+q^2+\cdots +q^{n-1}=\frac{1-q^n}{1-q}.
\end{align*}
We also use the notation $[n]!_q:=[1]_q[2]_q\cdots[n]_q$. If it is not confusing and not necessary to write explicitly out we leave the subscript $q$ and write simple $[n]$ instead of $[n]_q$. Similarly, $[n]!$ instead of $[n]!_q$. Note that the above expressions become the integer $n$ and the factorial $n!$ when $q=1$.

\section{$q$-analogue based on the obvious interpretation}
First, we generalize the formula \eqref{eq:pBcomb} in a straightforward manner. We remember the definition of Carlitz's $q$-Stirling numbers:
\begin{align}\label{qStir2}
\stirling{n}{m}_q=\stirling{n-1}{m-1}_q+[k]_q\stirling{n-1}{m}_q,
\end{align}
with the initial condition $\stirling{n}{0}_q=\stirling{0}{m}_q=0$ except $\stirling{0}{0}_q=1$.

We define the following $q$-analogue of the poly-Bernoulli numbers
\begin{align}\label{qnalog2}
B_{n,q}^{(k)}:=\sum_{m=0}^{\min(n,k)} [m]! {n+1 \brace m+1}_q[m]! {k+1 \brace m+1}_q.
\end{align}
Before we give the combinatorial interpretation of these numbers it is useful to consider the $q$-analogue of the Fubini numbers.

 The \textit{Fubini numbers}, also called the \textit{ordered Bell
numbers}, $F_n$, count the total number of set partitions of $\{1,2,\ldots, n\}$ where the blocks are ordered. They are given by
\begin{equation}
F_n=\sum_{k=0}^n k!{n \brace k} \text{ for }  n>0 \text{ and }  F_{0} =1.
\label{fubini-formula1}
\end{equation}

Note that the combinatorial identity \eqref{eq:pBcomb} of poly-Bernoulli numbers has a great similarity with \eqref{fubini-formula1} of Fubini numbers. For this reason, in this section we will study a $q$-analogue of the Fubini numbers. We present a combinatorial interpretation by using inversions.

\begin{definition}\cite[pp.205]{Mansour1}
Let $\omega= B_1/B_2/ \cdots /B_k$ be any set partition and $b\in B_i$.
We will say that $(b,B_j)$ is an \emph{inversion} if $b > \min B_j$ and $i < j$. We define
the \emph{inversion number} of $\omega$, written $\Inv^*(\omega)$, to be the number of inversions in $\omega$.
\end{definition}

For example, if $\omega=137/26/45$  then $(3,B_2), (7,B_2), (7,B_3), $ and $(6,B_3)$ are the inversions of $\omega$ and $\Inv^*(\omega)=4$.

Note that the  $q$-Stirling numbers defined by the recursion \eqref{qStir2} have the following combinatorial interpretation
$${n \brace k}_q=\sum_{\omega\in \Pi_{n,k}}q^{\Inv^*(\omega)},$$
where $\Pi_{n,k}$ is the number of set partitions of $\{1,2,\ldots, n\}$ into $k$ blocks.

 We define the \emph{$q$-Fubini numbers} using q-analogues of the expressions of \eqref{fubini-formula1} by the equality
$$F_{n,q}:=\sum_{k=0}^n[k]_q! {n \brace k}_q.$$
It is clear that $\lim_{q\to 1} F_{n,q}=F_n.$ The first few $q$-Fubini numbers are
$$1, \quad 1, \quad q+2, \quad q^3+3 q^2+5 q+4, \quad q^6+4 q^5+9 q^4+16 q^3+20 q^2+17 q+8, \dots$$

\begin{theorem}
The $q$-Fubini numbers are given by
 $$F_{n,q}=\sum_{\pi\in O_{n}}q^{\Inv^*(\pi)},$$
 where $O_n$ is the set of ordered set partitions of $\{1,2,\ldots,n\}$.
 \end{theorem}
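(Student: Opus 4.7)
The plan is to establish the refined identity
\[
G_{n,k} := \sum_{\pi \in O_{n,k}} q^{\Inv^*(\pi)} = [k]_q!\,{n \brace k}_q,
\]
where $O_{n,k}$ denotes the ordered partitions of $\{1,\ldots,n\}$ into exactly $k$ blocks, and then sum over $k$ to deduce the theorem.

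The heart of the argument is a recurrence obtained by tracking the role of the maximum element $n$ in $\pi \in O_{n,k}$. Since $n$ exceeds every other element, inserting it never alters the minimum of any block, and hence the only new inversions it creates are of the form $(n, C_s)$ for blocks $C_s$ appearing after the block containing $n$. If $\{n\}$ forms a singleton placed as the $i$-th block ($1 \le i \le k$) of some $\pi' \in O_{n-1,k-1}$, then exactly $k-i$ new inversions arise, contributing a factor $q^{k-i}$; summing over $i$ yields a total contribution of $[k]_q \cdot G_{n-1,k-1}$. Otherwise $n$ is merged into the $j$-th block ($1 \le j \le k$) of some $\pi' \in O_{n-1,k}$, again producing $k-j$ new inversions and a total contribution of $[k]_q \cdot G_{n-1,k}$. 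Combining,
\[
G_{n,k} = [k]_q \left( G_{n-1,k-1} + G_{n-1,k} \right),
\]
with boundary values $G_{0,0}=1$ and $G_{0,k}=G_{n,0}=0$ for $n,k \ge 1$.

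An induction on $n$ then finishes the proof. Writing $f(n,k) := [k]_q!\,{n \brace k}_q$ and invoking the $q$-Stirling recurrence \eqref{qStir2},
\[
[k]_q \left( f(n-1,k-1) + f(n-1,k) \right) = [k]_q! \left( {n-1 \brace k-1}_q + [k]_q\,{n-1 \brace k}_q \right) = f(n,k),
\]
so $f$ satisfies the same recurrence with the same initial data as $G$. Summing over $k$ gives $F_{n,q} = \sum_{\pi \in O_n} q^{\Inv^*(\pi)}$.

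The only nontrivial step is the combinatorial bookkeeping in the recurrence, but nothing subtle hides there because the maximality of $n$ preserves all previously counted inversions. The main obstruction to a purely bijective proof via the naive map $O_{n,k} \leftrightarrow \Pi_{n,k} \times S_k$ given by reordering blocks into canonical minima-increasing order is that $\Inv^*(\pi)$ does not factor cleanly as $\Inv^*(\omega) + \mathrm{inv}(\sigma)$; the defect depends on the block contents, and the weighted count factorizes only after aggregating over all underlying set partitions. The induction sidesteps this obstacle by peeling off a single element at a time.
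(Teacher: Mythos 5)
Your proposal is correct and follows essentially the same route as the paper's proof: both establish the refined identity for ordered partitions into exactly $k$ blocks by showing that the weighted count and $[k]_q!{n \brace k}_q$ satisfy the common recurrence $[k]_q\left(\cdot_{n-1,k-1}+\cdot_{n-1,k}\right)$ with the same initial conditions, using the same case analysis on whether the maximal element $n$ forms a singleton block or joins an existing block. The closing remark about why a naive bijection with $\Pi_{n,k}\times S_k$ fails is a nice observation but is not needed for the argument.
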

 \begin{proof}
Let $T_q(n,k):=[k]_q! {n \brace k}_q$. From \eqref{qStir2} it is possible to prove that $T_q(n,k)$ satisfies the recurrence relation
\[
T_q(n,k)=[k]_q\left(T_q(n-1,k-1) + T_q(n-1,k)\right),
\]
with the initial condition $T_q(n,0)=0=T_q(0,k)$ for $n>0$ and  $T_q(0,0)=1$.  On the other hand, let $U_q(n,k):=\sum_{\pi\in O_{n,k}}q^{\Inv^*(\pi)},$ where $O_{n,k}$ is the set of ordered set partitions of $\{1,2,\ldots, n\}$ into $k$ non-empty blocks.  The sequence $U_q(n,k)$ satisfies the same recurrence relation of $T_q(n,k)$, with the same initial values. Indeed, for any ordered partition  $\pi$ of  $\{1,2,\ldots, n\}$ into $k$ non-empty blocks, there are two options: either the element $n$ is in a single block or it is in a block with more than one element. In the first case,  if the single block is in the $i$-th position, the contribution is $q^{k-i}U_q(n-1,k-1)$, for $i=1, 2, \dots, k$. Summing over the possible values of $i$, we obtain $[k]_qU_q(n-1,k-1)$.

In the second case, the element $n$ can be placed into one of the $k$ blocks. Then its contribution is $q^{k-i}U_q(n-1,k)$, where $i$ is the position of the block from left to right. Summing over the possible values of $i$, we obtain $[k]_qU_q(n-1,k)$.  Therefore,  we conclude that $U_q(n,k)=T_q(n,k)$ for all $n, k\geq 0$.
\end{proof}

For example,  $F_{3,q}=q^3+3 q^2+5 q+4,$ with the ordered partitions and their respective weights  being
\begin{align*}
&\{ 1, 2, 3\}   \rightarrow  1  & &  \{ 1, 2\}, \{ 3\}   \rightarrow  1 &  &  \{ 3\},  \{ 1, 2\}   \rightarrow  q  &   &  \{ 1,3\},  \{ 2\}   \rightarrow  q  & \\
&  \{ 2\},  \{ 1, 3\}   \rightarrow  q  & &  \{2, 3\},  \{ 1\}   \rightarrow  q^2  & &  \{ 1\},  \{ 2,3\}   \rightarrow  1  &   &  \{ 1\},  \{ 2\},  \{ 3\}   \rightarrow  1  & \\
  &  \{ 1\},  \{ 3\},  \{ 2\}   \rightarrow  q  &   &  \{ 2\},  \{ 1\},  \{ 3\}   \rightarrow  q  &    &  \{ 2\},  \{ 3\},  \{ 1\}   \rightarrow  q^2  &    &  \{ 3\},  \{ 2\},  \{ 1\}   \rightarrow  q^3  & \\
    &  \{ 3\},  \{ 1\},  \{ 2\}   \rightarrow  q^2  &
\end{align*}

Now we turn our attention to the combinatorial interpretation of the formula \eqref{qnalog2}.

The formula \eqref{eq:pBcomb} have the following obvious interpretation. Let $\widehat{N}$ the set $\{1,2,\ldots, n\}$ extended by a special element $\overline{0}$ and $\widehat{K}$ the set $\{1,2,\ldots, k\}$ extended by another special element $\overline{k+1}$. \eqref{eq:pBcomb} counts the alternating sequence of blocks of partitions of $\widehat{N}$ and $\widehat{K}$ such that the first block contains $\overline{0}$ and the last block contains $\overline{k+1}$ \cite{BH1}. Note that the number of blocks of $\widehat{N}$ and $\widehat{K}$ has to be equal to provide such an alternating sequence.
The interpretation of the $q$-analogue of poly-Bernoulli numbers $B_{n,q}^{(k)}$ should be clear based on this fact and the interpretation of the $q$-Fubini numbers. We formulate it in the next theorem precisely.

\begin{theorem}
Let $\widehat{N}=\{\overline{0}, 1,\ldots, n\}$ and $\widehat{K}=\{1,2,\ldots, k, \overline{k+1}\}$ be two sets. We let $\pi_{n,k}$ denote an alternating sequence of blocks of partitions of $\widehat{N}$ and $\widehat{K}$ such that the first block contains $\overline{0}$ and the last block contains $\overline{k+1}$. Further, we let $\mathcal{OP}_{n,k}$ denote the set of all such alternating sequences. The $q$-poly-Bernoulli numbers $B_{n,q}^{(k)}$ counts the inversions in the sequences $\pi_{n,k}$:
\begin{align*}
B_{n,q}^{(k)}=\sum_{\pi_{n,k}\in \mathcal{OP}_{n,k}} q^{\Inv^*(\pi_{n,k})}.
\end{align*}
\end{theorem}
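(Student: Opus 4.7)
The plan is to reduce the claim to the $q$-Fubini identity just proved, by exploiting two facts: the inversion statistic splits cleanly into an $\widehat{N}$-part and a $\widehat{K}$-part, and within each part the special element ($\overline{0}$ or $\overline{k+1}$) is inert with respect to $\Inv^*$. To set up, the $N$- and $K$-blocks live in separate totally ordered universes---with $\overline{0}$ the minimum of $\widehat{N}$ and $\overline{k+1}$ the maximum of $\widehat{K}$---and no inversion is counted between them. An alternating sequence $\pi_{n,k}$ with $m+1$ blocks per side is then exactly a pair $(\pi_N,\pi_K)$ in which $\pi_N=(N_1,\dots,N_{m+1})$ is an ordered partition of $\widehat{N}$ with $\overline{0}\in N_1$ and $\pi_K=(K_1,\dots,K_{m+1})$ is an ordered partition of $\widehat{K}$ with $\overline{k+1}\in K_{m+1}$. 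Hence $\Inv^*(\pi_{n,k})=\Inv^*(\pi_N)+\Inv^*(\pi_K)$, and
\[
\sum_{\pi_{n,k}\in\mathcal{OP}_{n,k}}q^{\Inv^*(\pi_{n,k})}=\sum_{m=0}^{\min(n,k)}\Bigl(\sum_{\pi_N}q^{\Inv^*(\pi_N)}\Bigr)\Bigl(\sum_{\pi_K}q^{\Inv^*(\pi_K)}\Bigr).
\]

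I next evaluate the $\widehat{N}$-factor $S_N(n,m):=\sum_{\pi_N}q^{\Inv^*(\pi_N)}$ by splitting on whether $N_1=\{\overline{0}\}$ or $N_1\supsetneq\{\overline{0}\}$. Because $\overline{0}$ is smaller than every other element of $\widehat{N}$, it never satisfies $\overline{0}>\min B_j$ and hence cannot play the role of the large element in any inversion; and because $\overline{0}$ sits in the first block, no pair has $N_1$ on the right, so the value of $\min N_1$ is never tested. Thus deleting $\overline{0}$ from its block preserves $\Inv^*$, and the residual object is an ordered partition of $\{1,\dots,n\}$ into $m$ non-empty blocks (when $N_1=\{\overline{0}\}$) or into $m+1$ non-empty blocks (otherwise). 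Summing via the $q$-Fubini theorem and applying the Carlitz recurrence \eqref{qStir2},
\[
S_N(n,m)=[m]_q!\,{n\brace m}_q+[m+1]_q!\,{n\brace m+1}_q=[m]_q!\Bigl({n\brace m}_q+[m+1]_q\,{n\brace m+1}_q\Bigr)=[m]_q!\,{n+1\brace m+1}_q.
\]

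A fully parallel argument yields $S_K(k,m)=[m]_q!\,{k+1\brace m+1}_q$: one splits on whether $K_{m+1}=\{\overline{k+1}\}$, and observes that $\overline{k+1}$ is inert because it sits in the last block (so it is never the large element of any inversion, there being no block to its right) and is the maximum of $\widehat{K}$ (so either $K_{m+1}$ is the singleton $\{\overline{k+1}\}$ and no pair $(b,K_{m+1})$ is an inversion, or else $\overline{k+1}$ is not the minimum of $K_{m+1}$ and may be removed without changing $\min K_{m+1}$). Substituting both evaluations into the product formula above reproduces the defining sum \eqref{qnalog2} of $B_{n,q}^{(k)}$.

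The only delicate point is the inertness of the special elements under $\Inv^*$: one has to verify carefully that $\overline{0}$ (respectively $\overline{k+1}$), when pinned to the first (respectively last) block, neither creates nor destroys any inversion on insertion or removal from its block. Once this bookkeeping is in hand, the identity is an immediate consequence of the $q$-Fubini theorem (applied twice) together with the $q$-Stirling recurrence \eqref{qStir2}.
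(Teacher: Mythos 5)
Your proof is correct and follows the route the paper intends: the paper states this theorem without a written proof, asserting that it is clear from the $q$-Fubini theorem together with the ordered-partition-pair interpretation of \eqref{eq:pBcomb}, and your argument is precisely the missing verification. The two points you supply --- that $\overline{0}$ and $\overline{k+1}$ are inert for $\Inv^*$ once pinned to the first and last block, and that the case split on whether their block is a singleton combines with the Carlitz recurrence to give $[m]!_q{n \brace m}_q+[m+1]!_q{n \brace m+1}_q=[m]!_q{n+1 \brace m+1}_q$ --- are exactly what is needed, and both are carried out correctly.
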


For example, $B_{3,q}^{(1)}= 4 + 3 q + q^2$. This polynomial can be obtained by counting the inversions in the pairs of ordered partitions of the augmented sets $\{\textcolor{blue}{\overline{0}, 1, 2, 3}\}$ and $\{\textcolor{red}{1, \overline{2}}\}$. We list the elements of $\mathcal{OP}_{3,1}$ with their weights.
\begin{align*}
&\textcolor{blue}{\overline{0}123}/\textcolor{red}{1\overline{2}}  \rightarrow  1  \hspace{1cm} \textcolor{blue}{\overline{0}}/\textcolor{red}{1}/\textcolor{blue}{123}/\textcolor{red}{\overline{2}}  \rightarrow  1  \hspace{1cm}  \textcolor{blue}{\overline{0}1}/\textcolor{red}{1}/\textcolor{blue}{23}/\textcolor{red}{\overline{2}}  \rightarrow  1  \hspace{1cm}  \textcolor{blue}{\overline{0}23}/\textcolor{red}{1}/\textcolor{blue}{1}/\textcolor{red}{\overline{2}}  \rightarrow  q^2  \\
&  \textcolor{blue}{\overline{0}12}/\textcolor{red}{1}/\textcolor{blue}{3}/\textcolor{red}{\overline{2}}  \rightarrow  1 \hspace{1cm} \textcolor{blue}{\overline{0}3}/\textcolor{red}{1}/\textcolor{blue}{12}/\textcolor{red}{\overline{2}}  \rightarrow  q   \hspace{1cm}   \textcolor{blue}{\overline{0}13}/\textcolor{red}{1}/\textcolor{blue}{2}/\textcolor{red}{\overline{2}}  \rightarrow  q \hspace{1cm} \textcolor{blue}{\overline{0}2}/\textcolor{red}{1}/\textcolor{blue}{13}/\textcolor{red}{\overline{2}}  \rightarrow  q
\end{align*}

Notice that the number of blocks of the partitions of the two sets $\widehat{N}$ and $\widehat{K}$ is equal.

\section{$q$-analogue  on lonesum matrices}

In this section we introduce another $q$-analogue that arise naturally as a generalization of the formula \eqref{eq:pBcomb}. For this aim we recall a $q$-analogue of the Stirling numbers of the second kind defined by Cigler \cite{Cigler} based on a block of a set partition that contains a particular element.

Let us denote by $\Pi (n,k)$ the set of partitions of $\{0, 1, \dots, n-1\}$ into $k$ non-empty blocks. It is clear that $|\Pi(n,k)|={n \brace k}$. Suppose $\pi\in\Pi(n,k)$  is represented as $B_0/B_1/\cdots/ B_{k-1}$, where $B_0$ denotes the block containing the element zero. Define the \emph{weight $w_1$} of the partition $\pi$ by letting
$$w_1(\pi):=q^{\sum_{i\in B_0}i}.$$

The \emph{$q$-Stirling numbers of the second kind} \cite{Cigler},  denoted by ${n \brace k}^*_q$,  are defined by
$${n \brace k}^*_q=\sum_{\pi\in \Pi(n,k)}q^{w_1(\pi)}.$$

Based on this definition, we introduce the following $q$-analogue of poly-Bernoulli numbers. As we mentioned in the introduction a $01$ matrix is called \emph{lonesum} if it is uniquely reconstructible from its row and column sum vectors.

For example,  $B_2^{(-3)}=46$, the lonesum matrices of size $2\times 3$ appear in Figure \ref{fig1}. Note that we use  black squares for the ones and white squares for the zeros.
\begin{figure}[H]
\centering
\includegraphics[scale=1]{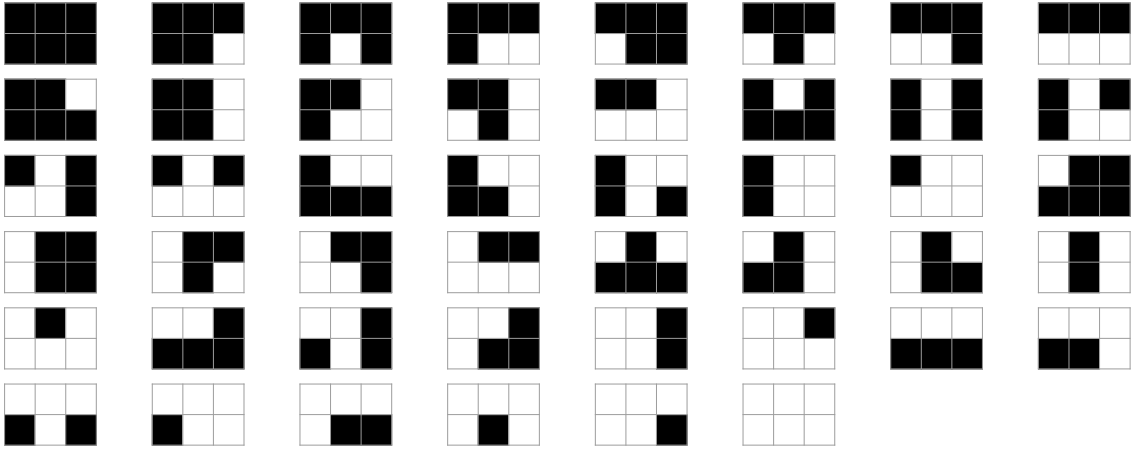}
\caption{Lonesum matrices of size $2\times 3$.} \label{fig1}
\end{figure}

Let $\Lo(n,k)$ be the set of lonesum matrices of size $n \times k$. Brewbaker \cite{Brewbaker} proved that
\begin{align*}%\label{polyb}
|\Lo(n,k)|=\sum_{m=0}^{\min\{n,k\}}m!{n+1 \brace m+1}m!{k+1 \brace m+1}.
\end{align*}

We define the following $q$-analogue of the \emph{poly-Bernoulli numbers}:
$$p_q(n,k):=\sum_{m=0}^{\min\{n,k\}}m!{n+1 \brace m+1}^*_qm!{k+1 \brace m+1}^*_q.$$

Given a matrix $A$, we define the set  $\Nu(A)$ as the indices of the all-$0$ columns and all-$0$ rows of $A$.   The \emph{weight $w_2$} of a lonesum matrix  $A$ is defined by letting
$$w_2(A):=q^{\sum_{i\in \Nu}i}.$$

For example,  for the matrix
$$A=
\left(
\begin{array}{ccccccccc}
 1 & 1 & 1 & 0 & 0 & 1 & 1 & 1 & 0 \\
 1 & 0 & 1 & 0 & 0 & 1 & 0 & 1 & 0 \\
 0 & 0 & 0 & 0 & 0 & 0 & 0 & 0 & 0 \\
 1 & 1 & 1 & 1 & 0 & 1 & 1 & 1 & 0 \\
 1 & 0 & 1 & 0 & 0 & 1 & 0 & 1 & 0 \\
 1 & 1 & 1 & 0 & 0 & 1 & 1 & 1 & 0 \\
\end{array}
\right)$$
we have that $\Nu(A)=\{3, 5, 9\}$. Therefore $w_2(A)=q^{17}.$

\begin{theorem}\label{teo1}
We have
$$p_q(n,k)=\sum_{A\in \Lo(n,k)}q^{w_2(A)}=\sum_{m=0}^{\min\{n,k\}}m!{n+1 \brace m+1}^*_qm!{k+1 \brace m+1}^*_q.$$
\end{theorem}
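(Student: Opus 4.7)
The second equality is nothing more than the definition of $p_q(n,k)$, so the content of the theorem is the first equality. The plan is to exhibit a bijection between $\Lo(n,k)$ and pairs of rooted ordered set partitions of $\{0,1,\dots,n\}$ and $\{0,1,\dots,k\}$ (with matching block counts) and then to check that the weight $w_2$ on the matrix side equals the sum of the two Cigler weights $w_1$ on the partition side.

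The bijection I would use is the classical one underlying Brewbaker's formula. Given $A\in\Lo(n,k)$, prepend a zero row labelled $0$ on top and a zero column labelled $0$ on the left, producing an augmented matrix $\widehat{A}$ of size $(n+1)\times(k+1)$. The lonesum property implies that two rows of $\widehat{A}$ coincide if and only if they have the same row sum, and likewise for columns; moreover the number $m+1$ of distinct rows of $\widehat{A}$ equals its number of distinct columns. Ordering the row equivalence classes by increasing row sum produces an ordered partition $\pi$ of $\{0,1,\dots,n\}$ into $m+1$ blocks whose first block (the all-zero row class) contains $0$; the same procedure applied to the columns yields an ordered partition $\sigma$ of $\{0,1,\dots,k\}$ with $0$ in its first block. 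Conversely, any such pair of rooted ordered partitions with matching block counts reconstructs a unique lonesum matrix, so the map is a bijection.

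Next I would verify that the weights correspond. By construction, the block of $\pi$ containing $0$ is exactly $\{0\}\cup\{i : \text{row } i \text{ of } A \text{ is all zero}\}$, and the block of $\sigma$ containing $0$ is $\{0\}\cup\{j : \text{column } j \text{ of } A \text{ is all zero}\}$. Hence
\[
w_1(\pi)+w_1(\sigma)=\sum_{i\in B_0(\pi)} i + \sum_{j\in B_0(\sigma)} j=w_2(A),
\]
since the artificial index $0$ contributes nothing and the remaining summands are precisely the indices recorded in $\Nu(A)$.

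To finish, I would group lonesum matrices by the common number $m+1$ of distinct rows (equivalently, distinct columns). Because $w_1$ depends only on the underlying unordered partition, the $m!$ orderings of the non-root blocks of $\pi$ all carry the same weight, so the generating polynomial over such rooted ordered partitions is $m!\,{n+1 \brace m+1}^*_q$; the analogous factor on the column side is $m!\,{k+1 \brace m+1}^*_q$. Summing over $m$ from $0$ to $\min(n,k)$ then recovers the defining formula for $p_q(n,k)$. The only step that requires genuine work is the structural lemma that a lonesum matrix has as many distinct row vectors as distinct column vectors and that distinct row sums label distinct rows; once this is in hand, the weight identification is immediate by inspection.
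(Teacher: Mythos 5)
Your proposal is correct and follows essentially the same route as the paper: both rest on the classical Brewbaker-type bijection between lonesum matrices and pairs of ordered partitions of the index sets augmented by $0$, with the block containing $0$ collecting exactly the all-zero rows (respectively columns), which immediately identifies $w_2(A)$ with the product of the two Cigler weights. Your write-up merely spells out in more detail the structural facts (nested supports, equal numbers of distinct rows and columns) that the paper delegates to the cited references.
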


\begin{proof}
A lonesum matrix $A$ is determined by a pair of ordered partitions. See a detailed proof of how to construct bijectively a lonesum matrix from a given ordered partition pair for instance \cite{Benyi, BH1, BH, Brewbaker}. The key observation in a lonesum matrix is that two rows $R$ and $R'$ are either identical or the one with more $1$ entries, say $R'$, can be obtained from $R$ by switching some $0$s in $R$ to $1$s. The same is true for the columns. Hence, an ordered partition of the row indices corresponds to the order of the rows as follows: the indices of the identical rows are contained in the same block, and the order is determined by the number of $1$s in the particular rows. For the special all-zero rows,  we introduce a special block, by adding to the set of the row-indices the $0$ and taking the ordered partition of the extended set. Similarly, for columns. According to this procedure, the block containing $0$, contains the row indices of the all-zero rows. (The same is true for columns.) The theorem follows.
\end{proof}
\section{Inversions in Vesztergombi permutations}
In this section we describe a natural $q$-analogue of poly-Bernoulli numbers based on a permutation class. Vesztergombi \cite{Vesztergombi} investigated permutations with restrictions on the distance of the elements and their images (considering a permutation as a bijection on $\{1,2,\ldots, n\}$). Launois \cite{Launois} noticed that the number of such permutations with a particular bound is given by a poly-Bernoulli number. Several combinatorial proofs were later found for this fact \cite{Benyi, BH1, BH}. The $q$-analogue we introduce now arose in \cite{Sjostrand} during the study of intervals in Bruhat order. Here, we focus on the pure combinatorial point of view and on the theory of poly-Bernoulli numbers.
\begin{definition}
A permutation $\pi$ of $[n+k]$ is called \emph{Vesztergombi permutation} if
\[-k\leq \pi(i)-i \leq n
\]
for all $i\in[n+k]$.
\end{definition}
\begin{theorem}[\cite{Launois}]
Let $\mathcal{V}_n^k$ denote the set of Vesztergombi permutations. Then
\[|\mathcal{V}_n^k|=B_{n,k}.\]
\end{theorem}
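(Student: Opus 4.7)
I will prove $|\mathcal{V}_n^k|=B_n^{(-k)}$ by verifying the explicit formula \eqref{eq:pBcomb}; that is, by showing
\[
|\mathcal{V}_n^k| = \sum_{m=0}^{\min(n,k)} m!\,\stirling{n+1}{m+1}\,m!\,\stirling{k+1}{m+1}.
\]
The plan is to view a Vesztergombi permutation through its $(n+k)\times(n+k)$ permutation matrix $P_\pi$ and to partition this matrix into four blocks by cutting at row $k$ and column $n$: a NW block of size $k\times n$, a NE block of size $k\times k$, an SW block of size $n\times n$, and an SE block of size $n\times k$. A direct translation of the inequalities $-k\leq\pi(i)-i\leq n$ shows that NW and SE are unconstrained, while the $1$s in NE must satisfy $j\leq i$ (lower-triangular, after relabeling the right columns as $[k]$) and the $1$s in SW must satisfy $j\geq i$ (upper-triangular, after relabeling the bottom rows as $[n]$).

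Since $P_\pi$ has one $1$ per row and per column, the number of $1$s in each block is pinned down by a single parameter $m\in\{0,\dots,\min(n,k)\}$: NW has $m$, NE has $k-m$, SW has $n-m$, and SE has $m$. For fixed $m$, I count placements independently across the two constrained blocks: NE is filled by a placement of $k-m$ non-attacking rooks on the $k\times k$ staircase $\{(i,j):1\le j\le i\le k\}$, and SW is filled by a placement of $n-m$ non-attacking rooks on the $n\times n$ upper staircase. These two placements determine which $m$ top rows and which $m$ right columns remain free, and hence must be paired by NW and by SE respectively. The residual blocks NW and SE are then $m\times m$ sub-grids with no constraints, each contributing $m!$ matchings.

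The key classical ingredient is that the number of $r$-rook placements on the $k$-staircase equals the Stirling number $\stirling{k+1}{k+1-r}$, so that the two constrained blocks contribute $\stirling{k+1}{m+1}$ (with $r=k-m$) and $\stirling{n+1}{m+1}$ (with $r=n-m$). Multiplying these with the two $m!$ factors from NW and SE and summing over $m$ yields \eqref{eq:pBcomb}. The main subtlety, and the step I would verify most carefully, is that the block decomposition really decouples: the triangular constraints in NE and SW are defined purely in the local coordinates of those blocks and do not interact with the choice of which top or bottom rows or which left or right columns land in the unconstrained blocks. Once that independence is confirmed, the counting argument factors cleanly; otherwise, an inductive alternative would be to show that $|\mathcal{V}_n^k|$ satisfies the recurrence \eqref{eq:rec} by classifying Vesztergombi permutations of $[n+k+1]$ by the image of the new index $n+k+1$.
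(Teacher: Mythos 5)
Your argument is correct, and it is essentially the same approach the paper takes: the block decomposition you describe is exactly the board $V_{n+k}$ of Section 5 (two all-ones blocks $J_{n,k},J_{k,n}$ and two staircase blocks $T_n,T^k$), and your count via non-attacking rooks on the staircases, using $r$-rook numbers of the staircase equal to $\stirling{k+1}{k+1-r}$, is precisely the $q=1$ specialization of the Garsia--Remmel/Sj\"ostrand computation the paper carries out for Theorem \ref{teo7}. The decoupling you flag does hold, since the $m$ free top rows and $m$ free left columns (determined by the NE and SW placements) can be matched arbitrarily in the all-ones NW block, contributing $m!$ independently of which rows and columns they are, and likewise for SE, which yields \eqref{eq:pBcomb}.
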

For example,  $B_2^{(-2)}=14$, the permutation matrices corresponding to the Vesztergombi permutations of size $[2+2]$  appear in Figure \ref{fig2}. Note that  for a permutation $\pi \in \mathcal{V}_n^k$ this array is  obtained by placing a dot in row $i$ and column $\pi_i$ of an $(n+k) \times (n+k)$ array.
\begin{figure}[H]
\centering
\includegraphics[scale=0.45]{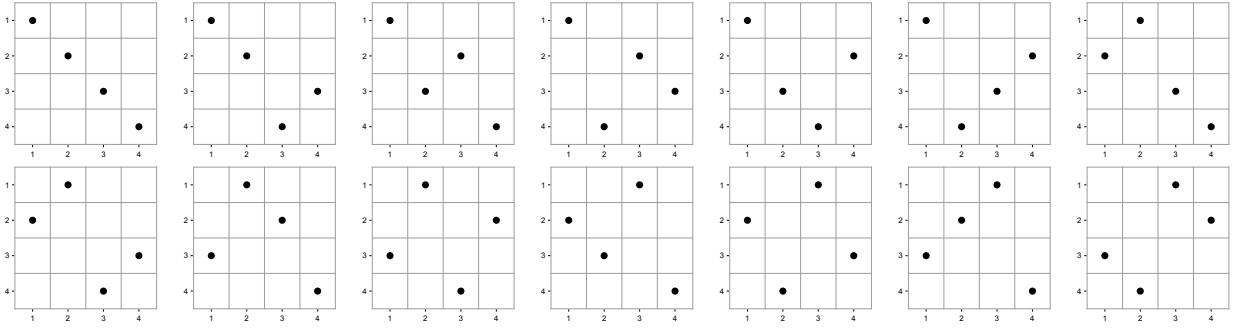}
\caption{Vesztergombi permutations of size $[2+2]$.} \label{fig2}
\end{figure}

Various proofs can be found in the literature: analytical \cite{Vesztergombi}, by constructing a bijection between lonesum matrices and Vesztergombi permutations \cite{Kim}, or using graph theoretical methods \cite{Benyi}. Here, we will use the description of the proof in \cite{BH}, since this is the one that can be used in the $q$-generalization.
For a permutation $\pi$, we denote by  $\mbox{inv}(\pi)$  the number of inversions of $\pi$, i.e., the number of pairs $(i, j)$, $i<j$ such that $\pi_i>\pi_j$. We define the $q$-analogue $pB_{n,k}(q)$ as follows:
\begin{align*}
pB_{n,k}(q):=\sum_{\pi\in \mathcal{V}_n^{k}}q^{\mbox{inv}(\pi)}.
\end{align*}

For example, from permutations in Figure \ref{fig2} we have that
$$pB_{2,2}(q)=1 + 3 q + 5 q^2 + 4 q^3 + q^4.$$

In Theorem \ref{teo7} we show a combinatorial identity for the sequence  $pB_{n,k}(q)$. Here we use the $q$-Stirling numbers $S_{n,k}(q)$ defined by the following recurrence
\begin{align*}
S_{n+1,k}(q)=q^{k-1}S_{n,k-1}+[k]_qS_{n,k}(q), \quad \mbox{for}\,0\leq k\leq n
\end{align*}
with the initial conditions $S_{0,0}(q)=1$ and $S_{n,k}(q)=0$ for $k<0$ or $k>n$.

Notice that these $q$-Stirling numbers have the following combinatorial interpretation
\begin{align*}
S_{n,k}(q)=\sum_{\pi \in \Pi_{n,k}}q^{\Inv^*(\pi)+\binom k2}.
\end{align*}

\begin{theorem}\label{teo7}
We have the combinatorial identity
\[pB_{n,k}(q)=q^{nk}\sum_{m=0}^{\min(n,k)}S_{n+1,m+1}(1/q)S_{k+1,m+1}(1/q)[m]!_q^2q^{m}.\]
\end{theorem}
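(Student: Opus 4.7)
The plan is to adapt the bijection from \cite{BH} between Vesztergombi permutations and pairs of ordered set partitions, and to track how the inversion statistic translates. First, using the relation $S_{n,k}(q) = q^{\binom{k}{2}}\stirling{n}{k}_q$ (which follows by comparing the Carlitz recursion \eqref{qStir2} with the recursion given for $S_{n,k}(q)$) together with $[m]!_{1/q} = q^{-\binom{m}{2}}[m]!_q$, the target identity reduces to the cleaner equivalent form
\[q^{nk}\,pB_{n,k}(1/q) \;=\; \sum_{m=0}^{\min(n,k)} q^m \stirling{n+1}{m+1}_{q} \stirling{k+1}{m+1}_{q} [m]!_q^2,\]
which is a $q$-analogue of \eqref{eq:pBcomb} weighting each $\pi \in \mathcal{V}_n^k$ by the complementary statistic $nk-\mbox{inv}(\pi)$. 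A first small task is to verify that this statistic is non-negative on $\mathcal{V}_n^k$, i.e., that $\max_{\pi\in\mathcal{V}_n^k}\mbox{inv}(\pi)=nk$.

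Next, I would recall the bijection of \cite{BH} sending each $\pi \in \mathcal{V}_n^k$ to a pair $(\rho_1,\rho_2)$, where $\rho_1$ is an ordered partition of $\widehat{N}=\{\overline{0},1,\ldots,n\}$ into $m+1$ blocks (the first containing $\overline{0}$), $\rho_2$ is an ordered partition of $\widehat{K}=\{1,\ldots,k,\overline{k+1}\}$ into $m+1$ blocks (the last containing $\overline{k+1}$), and $0\le m\le\min(n,k)$. The blocks of $\rho_1$ encode the row-type classes of the lonesum matrix associated to $\pi$ (rows of identical 0/1-pattern grouped together, ordered by number of $1$s), and similarly for $\rho_2$ on columns. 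The core identity to establish is
\[nk-\mbox{inv}(\pi) \;=\; \Inv^*(\rho_1) \;+\; \Inv^*(\rho_2) \;+\; \sigma(\rho_1) \;+\; \sigma(\rho_2) \;+\; m,\]
where $\sigma(\rho_i)$ is a statistic on the internal ordering of the $m$ non-distinguished blocks whose generating function over all re-orderings is $[m]!_q$, and the $+m$ records a boundary contribution coming from the interaction between the two distinguished blocks.

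The main obstacle is this combinatorial bookkeeping: every pair $(i,j)$ with $i<j$ in $[n+k]$ must be tracked through the bijection and attributed either to an inversion of $\pi$ or to exactly one of the four non-inversion contributions above. Row-row and column-column pairs should contribute to $\Inv^*(\rho_1)$ and $\Inv^*(\rho_2)$ respectively, while the $nk$ row-column pairs split into inversions and non-inversions according to the relative positions of the row- and column-blocks, producing the $[m]!_q^2\,q^m$ weighting of the reduced form above. Sanity checks on $n=k=1$ (giving $1+q$) and $n=k=2$ (giving $1+3q+5q^2+4q^3+q^4$) should confirm the decomposition before one commits to the full bijective argument; once the statistic split is verified, summing over $m$ and over $(\rho_1,\rho_2)$ with $m+1$ blocks assembles the right-hand side.
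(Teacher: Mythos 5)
Your opening reduction is correct and worth keeping: using $S_{n,k}(q)=q^{\binom{k}{2}}\stirling{n}{k}_q$ and $[m]!_{1/q}=q^{-\binom{m}{2}}[m]!_q$, the stated identity is indeed equivalent to $q^{nk}\,pB_{n,k}(1/q)=\sum_{m}q^{m}\stirling{n+1}{m+1}_q\stirling{k+1}{m+1}_q[m]!_q^2$, and this checks on the small cases you list. But the argument stops exactly where the mathematics begins. The entire content of the theorem is the claimed statistic decomposition $nk-\mbox{inv}(\pi)=\Inv^*(\rho_1)+\Inv^*(\rho_2)+\sigma(\rho_1)+\sigma(\rho_2)+m$, and you neither define $\sigma$ precisely nor prove the decomposition; you only describe what a proof would have to accomplish. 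This is not routine bookkeeping. The $nk$ row--column pairs do not split cleanly: the inversions of $\pi$ arising from the interaction of the $J_{n,k}$ and $J_{k,n}$ blocks of $V_{n+k}$ are precisely the delicate part (they are the reason Sj\"ostrand's product formula for boards of the form $B/A$ carries a correction factor $q^{-i^2}$, which after the global reflection collapses to the single $q^{m}$ in the statement). In addition, the statistic on ordered partitions of $\widehat{N}$ into $m+1$ blocks with the $\overline{0}$-block first whose generating function is $\stirling{n+1}{m+1}_q[m]!_q$ cannot simply be ``$\Inv^*$ of the unordered partition plus an independent ordering statistic,'' because $\Inv^*$ itself depends on the block order; this needs to be isolated and proved as a lemma. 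As written, your text is a plan rather than a proof.

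For contrast, the paper sidesteps the bijective bookkeeping entirely: it identifies $pB_{n,k}(q)$ with the $(n+k)$th $q$-rook number of the board $V_{n+k}$, writes $V_{n+k}$ as the reflection of $H_k^*/H_n$, and chains three known results --- the Garsia--Remmel evaluation $R_{k}^{H_n}(q)=q^{\binom{n}{2}}S_{n+1,n+1-k}(q)$, the reflection rule $R^{A'}(q)=q^{\binom{\cdot}{2}}R^{A}(1/q)$, and Sj\"ostrand's product formula for $R_{k+n}^{B/A}(q)$. If you pursue your route, be aware that the missing step is essentially a re-derivation of Sj\"ostrand's product formula in this special case; that is a legitimate and potentially illuminating alternative, but it is the hard part and it is absent.
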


One geometrical visualization of a Vesztergombi permutation can be given on a $(n+k)\times (n+k)$ matrix $V_{n+k}$ defined by:
\[
v_{ij}=\left\{\begin{array}{cl}
1, \quad& \mbox{if} \quad -k\leq i-j\leq n,\quad  i, j =1,\ldots,n+k;\\
0, \quad&\mbox{otherwise}.
\end{array}\right.
\]

The matrix $V$ is built up of 4 blocks, two all-1 matrices ($J_{n,k},J_{k,n}$), a lower ($T_n$) and an upper ($T^k$) triangular matrix. Precisely, let $V_{n+k}$ be the matrix:
\[
V_{n+k}=\left(
\begin{array}{cc}
J_{n,k} & T_n\\
T^k & J_{k,n}
\end{array}
\right),
\]
where $J_{n,k}\in\{0,1\}^{n\times k}$: $J_{n,k}(i,j)=1$ for all $i,j$,
$J_{k,n}\in\{0,1\}^{k\times n}$: $J_{k,n}(i,j)=1$ for all $i,j$,
$T_n\in\{0,1\}^{n\times n}$:
$T_n(i,j)=1$ if and only if $i\geq j$, and
$T^k\in\{0,1\}^{k\times k}$:
$T^k_{ij}=1$ if and only if $i\leq j$.
For a Vesztergombi permutation select exactly one $1$ entry from each row and column. This selection is actually one term in the permanent of the matrix $V_{n+k}$, hence, $|\mathcal{V}_{n}^k|$ is the permanent of the matrix $V_{n+k}$.

For example,  for $n=3$ and $k=2$ we have the matrix
$$V_{3+2}=V_5=\left(
\begin{array}{cc:ccc}
 1 & 1 & 1 & 0 & 0 \\
 1 & 1 & 1 & 1 & 0 \\
 1 & 1 & 1 & 1 & 1 \\ \hdashline
 1 & 1 & 1 & 1 & 1 \\
 0 & 1 & 1 & 1 & 1 \\
\end{array}
\right) \quad \text{and} \quad \text{perm}(V_5)=46.$$
Using other terms, we can say that this is a rook configuration on a board $V_{n+k}$, which is defined as the collection of the cells determined by the $1$ entries in the matrix.

Next, we recall some general results form the rook theory. A \emph{rook configuration}  $\mathcal{A}$  on a binary matrix $A$ are rooks on some of the $1$-entries of $A$ such that  no two rooks are in the same row or column.  The number of cells of $\mathcal{A}$ with no rook weakly to the right in the same row or below in the same column is denoted by $\mbox{inv}_A(\mathcal{A})$. In the special case where $A$ is an $n\times n$ matrix and $\mathcal{A}$ has $n$ rooks, i.e., the configuration corresponds to a permutation, $\pi$, by the representation $\pi_i=j$, $\mbox{inv}_A(\mathcal{A})$ became $\mbox{inv}(\pi)$.
The $k$th $q$-rook number for a board $\mathcal{A}$ is defined by Garsia and Remmel \cite{Garsia} as
\begin{align*}
R_{k}^{A}(q)=\sum_{\mathcal{A}}q^{\mbox{inv}_A(\mathcal{A})},
\end{align*}
where the sum is over all rook configurations on the matrix  $A$ with  $k$ rooks.

For example, in Figure \ref{Fig3} we show the Vesztergombi permutation  $\pi=31524\in V_3^2$ indicating how to calculate the inversions in this particular case. Notice that the rooks are denoted by black circles and the cells with circles denotes the positions  counted by the inversion statistic. The number of inversions of this rook configuration is 4, hence, its weight is $q^4$.
\begin{figure}[H]
\centering
\includegraphics[scale=0.8]{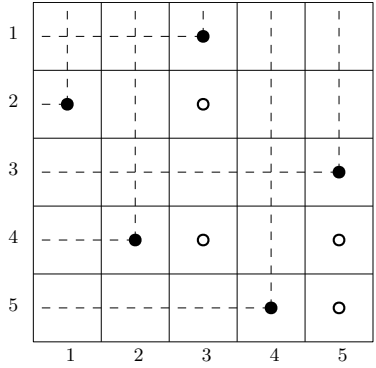}
\caption{Vesztergombi permutation  $\pi=31524\in V_3^2$ .} \label{Fig3}
\end{figure}
It is known \cite[Corollary 3]{Sjostrand}  that the $n$th $q$-rook number of the square matrix $J_{n,n}$ is given by $R_n^{J_{n,n}}(q)=[n]!_q$.  Further, Garcia and Remmel \cite[p. 248]{Garsia} showed that the $k$th $q$-rook number of the $n\times n$ matrix $H_n$ with ones on and above the secondary diagonal, that is
$$(H_n)_{i,j}=\begin{cases}
1,& i\leq n-j+1;\\
0,& \text{otherwise.}
\end{cases}$$
is given by
\begin{align*}
R_k^{H_n}(q)=q^{\binom{n}{2}}S_{n+1, n+1-k}(q).
\end{align*}
If we denote by $A'$ the matrix obtained by reflecting matrix $A$ upside down, then it is clear that $H_k=(T_k)'$. Further, from the relation $\mbox{inv}_{A'}(\mathcal{A})=\binom n2 - \mbox{inv}_A(\mathcal{A})$  we have that
\begin{align*}
R^{A'}_n(q)=q^{\binom{n}{2}}R_n^A\left(1/q\right).
\end{align*}
Finally, we need the result in \cite{Sjostrand} about the $(k+n)$th $q$-rook number of a matrix built up from the blocks of matrices $A$, $ B$, $J_{n,k}$ and $J_{k,n}$ as follows
\[
B/A:=\left(\begin{array}{cc}
B&J_{n,k}\\
J_{k,n}&A
\end{array}\right).
\]
For this case  the $(k+n)$th $q$-rook number  is
\begin{align*}
R_{k+n}^{B/A}(q)=\sum_{i=0}^{\min(k,n)}R_{k-i}^A(q)R_{n-i}^{B^*}(q)[i]!_qq^{-i^2},
\end{align*}
where $B^*$ is the matrix obtained by rotating the matrix $B$ in $180$ degrees.

It is clear that the matrix $V_{n+k}$ is the reflection of the matrix $H_k^*/H_n$.  Putting all the results above together, we have
\begin{align*}
pB_{n,k}(q)&=\sum_{\pi\in\mathcal{V}_n^k}q^{\mbox{inv}(\pi)}\\
&=R_{n+k}^{V_{n+k}}(q)=R_{n+k}^{(H_k^*/H_n)'}(q)=q^{\binom{n+k}{2}}R_{n+k}^{H_k^*/H_n}(1/q)\\
&=q^{\binom{n+k}{2}}\sum_{m=0}^{\min(n,k)}R_{n-m}^{H_n}(1/q)R_{k-m}^{H_k}(1/q)[m]!_{1/q}^2q^{m^2}\\
&=q^{\binom{n+k}{2}}\sum_{m=0}^{\min(n,k)}q^{-\binom{n}{2}}S_{n+1,m+1}(1/q)q^{-\binom{k}{2}}S_{k+1,m+1}(1/q)[m]!_{1/q}^2q^{m^2} \\
&=q^{nk}\sum_{m=0}^{\min(n,k)}S_{n+1,m+1}(1/q)S_{k+1,m+1}(1/q)[m]!_q^2q^{m}.
\end{align*}
The last equality follows from $[m]!_{1/q}=[m]!_qq^{-\binom{m}{2}}$.
\begin{remark}Sj\"{o}strand investigated  Coxeter groups in the paper \cite{Sjostrand}. He obtained the above formula as the Poincar\'{e} polynomial of the Bruhat interval $[\mbox{id},w]$ where $w$ is the maximal element in $A_{n-1}^{S\setminus \{s_k\}}$.
\[pB_{n,k}(q)=\mbox{Poin}_{[\mbox{id}, w]}(q).\]
\end{remark}

From the definition we have that
\begin{align*}
pB_{n,0}(q)&=pB_{0,n}(q)=1,\\
pB_{n,1}(q)&=pB_{1,n}(q)=(1+q)^n.\\
\end{align*}

Experimentally, we observed that for $n\geq2$,
\begin{align*}
pB_{n,2}(q)&=pB_{2,n}(q)=(1+q)W_n(-q),
\end{align*}
where $W_n(q)$ is the characteristic polynomial of the Sylvester  matrix $\mathcal{S}(P_{n}(q), P_{n+1}(q))$, with $P_n(q):=[n]_q=1+q+\cdots +q^{n-1}$.

For example,
 $$\mathcal{S}(P_{3}(q), P_{4}(q))=\left(
\begin{array}{ccccc}
 1 & 1 & 1 & 0 & 0 \\
 0 & 1 & 1 & 1 & 0 \\
 0 & 0 & 1 & 1 & 1 \\
 1 & 1 & 1 & 1 & 0 \\
 0 & 1 & 1 & 1 & 1 \\
\end{array}
\right)$$
and its characteristic polynomials is $W_3(q)=1 - 3 q + 6 q^2 - 7 q^3 + 5 q^4 - q^5$. Therefore,
\begin{align*}
pB_{3,2}(q)&=pB_{2,3}(q)=(1+q)W_3(-q)=1 + 4 q + 9 q^2 + 13 q^3 + 12 q^4 + 6 q^5 + q^6.
\end{align*}

\begin{question}
Prove (or disprove) this observation.
\end{question}

\section{$q$-analogue based on perm-matrices}
In this section we present a $q$-analogue based on a class of $01$-matrices avoiding a pair of $2\times 2$ submatrices. For the sake of getting ``nice'' results, we generalize this time the poly-Bernoulli relative, $C_n^{k}$,  analytically defined in \cite{AraKan} and combinatorially in \cite{BH} .

\begin{align*}
C_n^{k}=
\sum_{m=0}^{\min(n,k)}m!\stirling{n+1}{m+1} m!\stirling{k}{m},
\end{align*}

Our underlying objects are now \emph{perm-matrices}, a matrix class that can be defined by a forbiddance of two $2\times 2$ submatrices and with the extra condition that excludes all-$0$ columns.
\begin{definition}
A \emph{perm-matrix} is a $01$ matrix such that the following two properties hold:
\begin{enumerate}
\item each column contains at least one 1,
\item it avoids the submatrices
\end{enumerate}
\begin{align*}
\left(\begin{array}{cc}
   0&1\\
1&0
\end{array}\right)
\quad \mbox{and} \quad
\left(\begin{array}{cc}
   1&1\\
1&0
\end{array}\right).
\end{align*}

We denote the set of $n\times k$ perm-matrices by $\mathcal{P}_{n,k}$.
\end{definition}

It is proven for instance in \cite{BH} that
\begin{align*}
|\mathcal{P}_{n,k}|=C_n^k.
\end{align*}

The name is motivated by the fact that perm-matrices are special cases of permutation tableaux. Permutation tableaux were introduced by Steingr\'imsson and  Williams \cite{Stein-Wil}. They are a distinguished set of the Le-diagrams, introduced by Postnikov in the study of positive Grassmannians. Permutation tableaux are closely related to the partially asymmetric exclusion process, a model in statistical mechanics and are in bijection with permutations. A permutation tableau is defined as a $01$-filling of the cells of a Ferrers diagram satisfying the following two conditions:
\begin{enumerate}
\item each column contains a $1$,
\item there is no $0$ with a $1$ above in the same column and a $1$ to the left in the same row.
\end{enumerate}
We see that perm-matrices are permutation tableaux of rectangular shapes.
If we enumerate permutation tableaux with $k$ rows and $n-k$ columns according to the number of $1$s decreased by the number of columns, we obtain a $q$-analogue of the Eulerian numbers, $\widehat{E}_{n,k}(q)$, which polynomials were introduced and studied in \cite{Corteel2}. An explicit formula is
\begin{align*}
\widehat{E}_{n,k}(q)=q^{k-k^2}\sum_{i=0}^{k-1}(-1)^i[k-i]^nq^{ki-k}\left(\binom{n}{i}q^{k-i}+\binom{n}{i-1}\right).
\end{align*}
$\widehat{E}_{n,k}(q)$ specializes at $q=-1$ to binomial coefficients, at $q=0$ to Narayana numbers, and at $q=1$ to Eulerian numbers.

Here, we consider the special case, when the Ferrers diagram is a rectangle and define the $q$-analogue of the poly-Bernoulli numbers in a similar manner as the $q$-analogue of Eulerian numbers.

Given a matrix $M\in \mathcal{P}_{n,k}$ we define the weight of the matrix, $wt(M)$, as the total number of $1$s in the matrix reduced by the number of columns.

\begin{definition}
We introduce the $q$-analogue of the poly-Bernoulli relative as follows
\begin{align*}
C_n^k(q)=\sum_{M\in\mathcal{P}_{n,k}}q^{wt(M)}.
\end{align*}
\end{definition}

The motivation of the introduction of the $q$-analogue $C_n^k(q)$ comes from the strong connection to the theory of PASEP.  The partially asymmetric exclusion process, PASEP, is a model in statistical mechanics. The model describes the moves of particles on a one-dimensional lattice. Considering the lattice as an arrangements of $n$ cells, each cell can be occupied at the same moment by one particle and at each stage a particle can hop right or left. The model is asymmetric in the sense that the probability of hopping left is $q$ times the probability of hopping right. Additionally, particles may enter from left and exit to the right with given probability. (Here we assume always these probabilities to be $1$.) This model was studied intensively, we are here interested on the combinatorial aspects and connections, in particular on those results that are related to our $q$-poly-Bernoulli numbers.

According to Corteel and Williams \cite{Cortee1} the probability of finding the PASEP model such that exactly the first $k$ cells are occupied with particles in the steady state is
\begin{align*}
\frac{C_n^k(q)}{Z_{n+k}},
\end{align*}
where $Z_n$ is the partition function for the PASEP model, equal to the generating function for all permutation tableaux with $k$ rows and $n+1$ columns.

Moreover, according to \cite{Cortee1} $C_n^k(q)$ is the weight generating function of the following sets
\begin{itemize}
\item permutations such that the first $k$ elements are exactly the exceedances, enumerated according to crossings.
\item permutations such that the descents are the first $k$ elements, enumerated according to the occurrences of the pattern $2-31$.
\item weighted bicolored Motzkin paths such that the first $k$ steps are north and east steps, while the next $n$ steps are south and east steps, respectively.
\end{itemize}
For definitions and proofs see \cite{Cortee1}. The only difference here is that we require the \textit{first} $k$ elements to be exceedances and descents respectively,  and the \textit{first} $k$ steps to be north and east steps.
\begin{question}
Find explicit formulas for $C_{n}^k(q)$.
\end{question}

\section{$q$-analogue of Cenkci and Komatsu}

In this section we recall briefly some results of \cite{Cenkci} where the authors introduced and studied a $q$-analogue of poly-Bernoulli numbers.

The authors in \cite{Cenkci} defined poly-Bernoulli numbers with a $q$-parameter, $B_{n,q}^{(k)}$, for $n\geq 0$ and $k\geq 1$, $q$ a real number ($q\not=0$), by the generating function
\begin{align*}
\sum_{n=0}^{\infty}B_{n,q}^{(k)}\frac{t^n}{n!}=\frac{q\mbox{Li}_k\left(\frac{1-e^{-qt}}{q}\right)}{1-e^{-qt}}.
\end{align*}
The authors derived the analogue of the formulas of type \eqref{eq:pBszita} and \eqref{eq:rec}.  For example, we have
\begin{align*}
B_{n,q}^{(k)}&=\sum_{m=0}^{n}\stirling{n}{m}\frac{(-q)^{n-m}m!}{(m+1)^k},
\end{align*}
Further, the analogue of \eqref{eq:pBcomb} is true for all negative integer $k$:
\begin{align}\label{eq:qcomb}
B_{n,q}^{(k)}=q\sum_{j=0}^{\min(n,k)}(j!)^2S_2(n,j,q)S_2^{q^{-1}}(k+1,j+1),
\end{align}
where
\begin{align*}
S_2(n,j,q)&=\sum_{k=0}^{n}\binom{n}{k}q^{n-k}\stirling{k}{j}\quad \mbox{and}\\
\sum_{n=j}^{\infty}S_2^{q^{-1}}(n+1,j+1)\frac{t^n}{n!}&=\frac{(q^{-1}e^t-1)^jq^{-1}e^t}{j!}.
\end{align*}
We point out the similarity of the recursion \eqref{eq:rec} and the recursion that was derived for the so defined poly-Bernoulli numbers with a $q$ parameter in \cite{Cenkci}. We write the formula with negative $k$ indices in the form that emphasizes this similarity better.

\begin{align}\label{eq:qrec}
B_{n,q}^{(k+1)}=(n+1)B_{n,q}^{(k)}+ \sum_{i=1}^{n-1}q^{i}\binom{n}{i+1}B_{n-i,q}^{(k)}.
\end{align}
For the original formula \eqref{eq:rec} $\Gamma$-free matrices give a transparent explanation.
\emph{$\Gamma$-free matrices} were introduced in \cite{BH} as $01$ matrices avoiding the following two $2\times 2$ submatrices
\[
\Gamma =\left\{
 \begin{pmatrix} 1 & 1\\1 & 0 \end{pmatrix},
\begin{pmatrix}1 & 1 \\ 1 & 1\end{pmatrix}\right\}.
\]
For example,  the 46  $\Gamma$-free matrices  of size $2\times 3$ appear in Figure \ref{fig4}. Note that we use  black squares for the ones and white squares for the zeros.
\begin{figure}[H]
\centering
\includegraphics[scale=1]{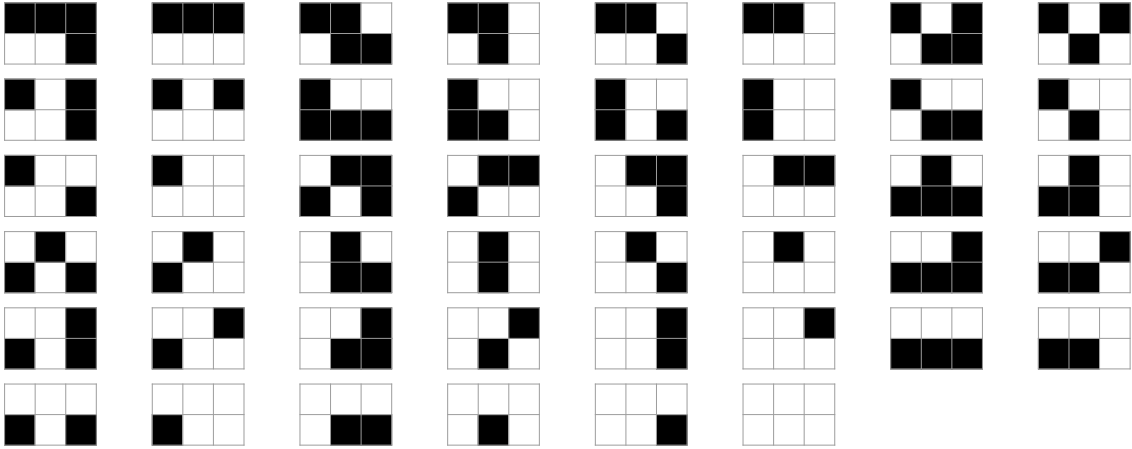}
\caption{$\Gamma$-free  matrices of size $2\times 3$.} \label{fig4}
\end{figure}

A bijective proof of the fact that $n\times k$ $\Gamma$-free matrices are enumerated by the poly-Bernoulli numbers, $B_n^{(-k)}$ was given in \cite{BH} and \cite{Nagy}. In \cite{BH} the authors give the explanation of the recursion \eqref{eq:rec}, which might be used to give the explanation for \eqref{eq:qrec}.

The main observation in a $\Gamma$-free matrix is that if a column contains at least two $1$s, the rows of these $1$s are contained include only $0$s to the right of these $1$s, except the row of the bottom most $1$. So, a $\Gamma$-free matrix with $k+1$ columns can be built as follows: choose some row indices, $\mathcal{R}$, write into these rows in the first column a $1$, and fill the rest of these rows with $0$s, except the bottom most row, the greatest element in $\mathcal{R}$. For the rest of the matrix there are no further restrictions, it can be constructed as an arbitrary $\Gamma$-free matrix with $n-|\mathcal{R}|+1$ rows and $k$ columns. Let $|\mathcal{R}|=i+1$, so that $i$ is the number of rows that we filled up with $0$ to the right of $1$ in the first column. By iterating this observation, we see that $q$ marks exactly those $1$s that have only $0$s to their right.

\begin{question}
Is it possible to give a combinatorial interpretation of $B_{n,q}^{(-k)}$ using $\Gamma$-free matrices or related combinatorial objects?
\end{question}

\section{Akiyama-Tanigawa algorithm}
In this section we give a generalization of the formula \eqref{eq:pBszita} replacing each part of the expression by its $q$-analogue. We use again  the $q$-Stirling numbers $\stirling{n}{m}_q$ introduced by Carlitz \cite{Carlitz}. Remember that they are defined by the recurrence relation
\[
\stirling{n}{m}_q=\stirling{n-1}{m-1}_q+[k]_q\stirling{n-1}{m}_q,
\]
with $\stirling{n}{0}_q=\stirling{0}{m}_q=0$ except $\stirling{0}{0}_q=1$.

\begin{definition}
We define the $q$-analogue of poly-Bernoulli number for any  $k\in \Z$ and $n\geq 0$ as
\begin{align*}
p_{n,k}(q):=(-1)^n\sum_{m=0}^{n}\frac{(-1)^m [m]!}{[m+1]^k}\stirling{n}{m}_q.
\end{align*}

\end{definition}

We show that this definition naturally arise from the $q$ generalization of the  Akiyama-Tanigawa algorithm defined by Zeng \cite{Zeng}. The Akiyama Tanigawa algorithm generates the Bernoulli numbers in a similar manner as Pascal's triangle the binomial coefficients. Akiyama and Tanigawa found this amusing algorithm during investigations of the multiple zeta functions \cite{Akiyama}. It is actually very similar to the Euler-Seidel matrix \cite{Zeng}, and strongly connected to Riordan arrays \cite{Merlini}.
The algorithm reads as follows, start with the $0$-th row $1, \frac{1}{2}, \frac{1}{3}, \ldots$ Define the first row as $1\cdot\left(1-\frac{1}{2}\right),  2\cdot\left(\frac{1}{2}-\frac{1}{3}\right), 3\cdot\left(\frac{1}{3}-\frac{1}{4}\right), \ldots$, giving the sequence $\frac{1}{2}, \frac{1}{3}, \frac{1}{4}, \ldots$. The next row is produced by $1\cdot\left(\frac{1}{2}-\frac{1}{3}\right), 2\cdot\left(\frac{1}{3}-\frac{1}{4}\right), 3\cdot\left(\frac{1}{4}-\frac{1}{5}\right), \ldots$. So the second row is $\frac{1}{6}, \frac{1}{6}, \frac{3}{20}, \ldots$. Generally, let $a_{k,n}$ denote the $k$th number in the $n$th row. Then $a_{n+1,k}$ is defined by the following recurrence:
\begin{align*}
a_{n+1,k}=(k+1)(a_{n,k}-a_{n,k+1})
\end{align*}
As a result of this generating rule, the sequence of the leading numbers of the rows, $a_{n,0}$ is the sequence of the Bernoulli numbers.  Kaneko \cite{KanekoAkiyama} showed that if the initial sequence is $1$, $\frac{1}{2^k}$, $\frac{1}{3^k}$, $\ldots$ instead of $1$, $\frac{1}{2}$, $\frac{1}{3}$, $\ldots$ the resulting sequence gives the poly-Bernoulli numbers instead of the Bernoulli numbers.

The $q$-analogue of this algorithm was investigated by Zeng \cite{Zeng} in order to define a similar computing rule for the $q$-Bernoulli numbers $\beta_n$ introduced by Carlitz as
\begin{align*}
q(q\beta +1)^n-\beta_n=\left\{\begin{array}{ccc}
    1,& \mbox{if} & n=1;\\
0, & \mbox{if}& n>1;
\end{array}\right.
\end{align*}
where $\beta_0=1$ and $\beta_k=\beta^k$ after expansion.

The key theorem of this study is the following.

\begin{theorem}\cite{Zeng}\label{th:Zeng}
Given an initial sequence $a_{0,m}$ ($m\geq 0$), let $a_{n+1,m}$ be defined recursively by
\begin{align*}
A.\quad  a_{n+1,m}&=[m+1](a_{n,m}-a_{n,m+1}),\\
B.\quad a_{n+1,m}&=[m]a_{n,m}-[m+1]a_{n,m+1}.
\end{align*}
Then
\begin{align*}
A.\quad a_{n,0}&=\sum_{m=0}^n(-1)^m[m]! \stirling{n+1}{m+1}_qa_{0,m},\\
B.\quad a_{n,0}&=\sum_{m=0}^n(-1)^m[m]! \stirling{n}{m}_qa_{0,m}.
\end{align*}
\end{theorem}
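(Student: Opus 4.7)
The natural plan is induction on $n$. However, the defining recurrences (A) and (B) both express $a_{n+1, m}$ in terms of $a_{n, m}$ and $a_{n, m+1}$, so a direct induction on the statement for $a_{n, 0}$ alone does not close. The first step is therefore to strengthen the induction hypothesis by establishing an explicit formula for $a_{n, m}$ at arbitrary $m \geq 0$, of the form
\[
a_{n, m} = \sum_{j=0}^n (-1)^j P_j(n, m)\, a_{0, m+j},
\]
for coefficients $P_j(n, m) \in \Z[q]$ with $P_0(0, m) = 1$ and $P_j(0, m) = 0$ for $j \geq 1$.

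Substituting this ansatz into the two recurrences and equating coefficients of $a_{0, m+k}$ yields the two-variable recurrences
\begin{align*}
\text{(A):}\quad & P_k(n+1, m) = [m+1]\bigl(P_k(n, m) + P_{k-1}(n, m+1)\bigr), \\
\text{(B):}\quad & P_k(n+1, m) = [m]\, P_k(n, m) + [m+1]\, P_{k-1}(n, m+1),
\end{align*}
each of which determines $P_j(n, m)$ uniquely from the initial data. Specialization at $m = 0$ is what we want: one needs to identify $P_j(n, 0) = [j]!\, \stirling{n+1}{j+1}_q$ in case A and $P_j(n, 0) = [j]!\, \stirling{n}{j}_q$ in case B, using the Carlitz $q$-Stirling recurrence $\stirling{n}{m}_q = \stirling{n-1}{m-1}_q + [m]_q\, \stirling{n-1}{m}_q$, which matches the shape of the two $P$-recurrences.

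The main obstacle is that neither of the two $P$-recurrences closes on the specialization $m = 0$; for instance, in case B one gets $P_k(n+1, 0) = P_{k-1}(n, 1)$, which couples to values at $m = 1$, and iterating forces one to track $P_{\bullet}(n, r)$ for all $r \leq n$ simultaneously. Small-case computation rules out the natural guess $P_j(n, m) = \frac{[j + m]!}{[m]!}\, \stirling{n + m}{j + m}_q$ (which already fails at $n = 1$, $m = 2$), so the closed form for arbitrary $m$ appears to be genuinely more complicated than a shifted $q$-Stirling number. A cleaner alternative would be to invoke the explicit formula
\[
[m]!\, \stirling{n}{m}_q = \sum_{i=0}^m (-1)^i q^{\binom{i}{2}} \binom{m}{i}_q [m-i]^n,
\]
and, by linearity of the transform in the initial data, reduce the claim to the case of the $\delta$-sequence initial condition $a_{0, r} = \delta_{r, r_0}$, for which one can directly compute $a_{n, 0}$ by iterating the defining operator and matching the result against the $q$-Pascal rule for $\binom{m}{i}_q$ together with the identity $[m+1] - [m] = q^m$.
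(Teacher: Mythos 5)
First, a point of reference: the paper does not prove Theorem~\ref{th:Zeng} at all --- it is quoted from Zeng \cite{Zeng} --- so there is no in-paper argument to compare yours against, and I can only judge the attempt on its own terms. As it stands it is not a proof. Your first route is set up correctly: the two recurrences for the coefficients $P_k(n,m)$ are right, and you are also right that the specialization $m=0$ does not close and that $P_j(n,m)$ is not a shifted $q$-Stirling number. But you then abandon that route, so it proves nothing. Your second route is only a gesture, and the one concrete ingredient you do state is wrong for the normalization in force here: with Carlitz's recurrence $\stirling{n}{m}_q=\stirling{n-1}{m-1}_q+[m]_q\stirling{n-1}{m}_q$ the explicit formula is $[m]!\,\stirling{n}{m}_q=q^{-\binom{m}{2}}\sum_{i=0}^m(-1)^iq^{\binom{i}{2}}{m \brack i}_q[m-i]^n$; the version without the prefactor $q^{-\binom{m}{2}}$ belongs to the variant of the $q$-Stirling numbers whose recurrence carries an extra $q^{m-1}$ on the first term. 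So the ``matching'' you defer to the end would not come out, and the actual computation of $a_{n,0}$ for a $\delta$-sequence --- which is the entire content of the theorem --- is never performed.

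For what it is worth, the $\delta$-sequence idea does work once that computation is supplied, and the cleanest way is to diagonalize the transfer operator rather than to iterate it. For case B write $a_{n,\cdot}=L^na_{0,\cdot}$ with $(La)_m=[m]a_m-[m+1]a_{m+1}$; then $L\delta_r=[r](\delta_r-\delta_{r-1})$, so $L$ is triangular with eigenvalues $[j]$ and eigenvectors $v^{(j)}=\sum_{r=0}^j(-1)^rq^{\binom{r}{2}}{j \brack r}_q\,\delta_r$ (the ratio $v_{r+1}/v_r=([r]-[j])/[r+1]=-q^r[j-r]/[r+1]$ is where your identity $[m+1]-[m]=q^m$ actually enters). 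Expanding $\delta_m$ in the $v^{(j)}$ by $q$-binomial inversion and reading off the $\delta_0$-coordinate of $L^n\delta_m$ yields $q^{-\binom{m}{2}}\sum_{j}(-1)^jq^{\binom{m-j}{2}}{m \brack j}_q[j]^n$, which equals $(-1)^m[m]!\,\stirling{n}{m}_q$ by the corrected explicit formula; case A is identical with $(La)_m=[m+1](a_m-a_{m+1})$, whose spectrum $\{[j+1]\}$ accounts for the shift to $\stirling{n+1}{m+1}_q$. Some such completion is required: as submitted, your argument stops exactly where the work begins.
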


Zeng showed that if the $0$th row, i.e., the initial sequence $a_{0,m}$ is $1$, $\frac{1}{[2]_q}$, $\frac{1}{[3]_q}$, $\ldots$ then the sequence $a_{n,0}$ are the $q$-Bernoulli numbers $\beta_n$ and presented a new formula for $\beta_n$, since the theorem implies:
\begin{align*}
\beta_n=\sum_{k=0}^{n}(-1)^k\stirling{n+1}{k+1}_q \frac{[k]!_q}{[k+1]_q}\quad (n\geq 2).
\end{align*}

Turning now to the generalization of the poly-Bernoulli numbers, the next theorem is a straightforward consequence of these previous results.

\begin{theorem}
Given the sequence $a_{0,m}=[m+1]^k$, $(m\geq 0)$, let $a_{n+1,m}$ be defined recursively by
\[ a_{n+1,m}=[m]a_{n,m}-[m+1]a_{n,m+1}.\]
Then the sequence which are built from the first element of each row is the $q$-poly-Bernoulli numbers.
\[a_{n,0}=p_{n,k}(q).\]
\end{theorem}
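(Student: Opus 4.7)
The plan is simply to apply Zeng's Theorem~\ref{th:Zeng}(B) in the form recorded just above the statement. First I would verify that the hypotheses match: the recursion
\begin{align*}
a_{n+1,m}=[m]a_{n,m}-[m+1]a_{n,m+1}
\end{align*}
is verbatim the rule $B$ in Zeng's theorem, so the conclusion
\begin{align*}
a_{n,0}=\sum_{m=0}^{n}(-1)^m[m]!\stirling{n}{m}_q\, a_{0,m}
\end{align*}
applies without any additional work. There is nothing to reprove here; the whole point of having recorded Theorem~\ref{th:Zeng} in this form is to make this step a one-liner.

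Next I would substitute the prescribed initial sequence $a_{0,m}=[m+1]^k$ into the closed-form above and compare the resulting identity term by term with the definition
\begin{align*}
p_{n,k}(q)=(-1)^n\sum_{m=0}^{n}\frac{(-1)^m[m]!}{[m+1]^k}\stirling{n}{m}_q.
\end{align*}
Under the paper's sign convention on $k$, the two right-hand sides coincide summand by summand, yielding $a_{n,0}=p_{n,k}(q)$ as desired. This mirrors exactly Kaneko's classical observation that the Akiyama--Tanigawa algorithm initialized by $1/(m+1)^k$ produces the poly-Bernoulli numbers, now with Zeng's $q$-machinery taking the place of the classical identities throughout.

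The only potential pitfall is clerical rather than conceptual: one has to keep careful track of the $(-1)^n$ prefactor in the definition of $p_{n,k}(q)$ together with the sign of the exponent $k$ on $[m+1]$ in the initial sequence, so that the two expressions really do match without a stray sign. Once those signs are reconciled, the theorem is immediate, which is why the authors describe it as a straightforward consequence of the $q$-Akiyama--Tanigawa theorem rather than a result requiring a genuinely new idea.
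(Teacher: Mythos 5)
Your proposal matches the paper's own (implicit) argument exactly: the paper offers no proof beyond declaring the theorem a straightforward consequence of Zeng's Theorem~\ref{th:Zeng}, case B, which is precisely the substitution of $a_{0,m}=[m+1]^k$ into that closed form that you carry out. Your closing remark about reconciling the $(-1)^n$ prefactor and the sign of the exponent $k$ with the definition of $p_{n,k}(q)$ is a fair flag of bookkeeping that the paper itself glosses over.
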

Next, we give an expression for the generating function of $p_{n,k}(q)$:
\begin{align*}
\sum_{n=0}^{\infty}p_{n,k}(q)\frac{z^n}{[n]!}
\end{align*}

Ernst gives (see Theorem 5.2.17 in \cite{Ernst}) the following generating function for the $q$-Stirling numbers
\begin{align}\label{qgefun}
\sum_{n=m}^{\infty} \stirling{n}{m}_q\frac{z^n}{[n]!}=\frac{1}{[m]!q^{\binom m2}}\sum_{i=0}^m  {m \brack i}_q(-1)^iq^{\binom i2} E_q(z[m-i]),
\end{align}
where the $q$-binomial is defined by $ {n \brack k}_q= \frac{ [n]_q!}{ [k]_q! [n-k]_q!},$ and the $q$-exponential is given by
$$E_q(z):=\sum_{k=0}^{\infty}\frac{z^k}{[k]!}.$$

Note that for $q=1$ we recover the well-known generating function for the Stirling numbers of the second kind
$$\sum_{n=m}^{\infty} \stirling{n}{m}\frac{z^n}{n!}=\frac{(e^{z}-1)^m}{m!}.$$

From \eqref{qgefun} we obtain the following expression
$$\sum_{n=0}^{\infty}p_{n,k}(q)\frac{z^n}{[n]!}=\sum_{i=0}^{\infty}\sum_{m=0}^{\infty}\frac{(-1)^m}{[m+i+1]^k}q^{\binom i2- \binom{m+i}{2}}{m+i \brack i}_q E_q(-z[m]).$$
For $q=1$ we recover \eqref{gefunpoly}.

\begin{question}
Is there any combinatorial interpretation of the $q$-poly-Bernoulli numbers $p_{n,k}(q)$?
\end{question}


\begin{thebibliography}{99}

\bibitem{Akiyama} S.~Akiyama and Y.~Tanigawa, Multiple zeta values at non-positive integers, Ramanujan J. \textbf{5} (2001), 327--351.

\bibitem{AraKan} T.~Arakawa and M.~Kaneko, Multiple zeta values, poly-Bernoulli numbers and related zeta functions. Nagoya Math. J. \textbf{153} (1999), 189--209.

\bibitem{Bayad2} A.~Bayad and Y.~Hamahata, Arakawa-Kaneko $L$-functions and generalized poly-Bernoulli polynomials,  J. Number Theory \textbf{131} (2011), 1020--1036.

\bibitem{Bayad} A.~Bayad and Y.~Hamahata, Multiple polylogarithms and multi-poly-Bernoulli polynomials, Funct. Approx. Comment. Math. \textbf{46} (2012), 45--61.

\bibitem{Benyi} B.~B\'{e}nyi, Advances in bijective combinatorics.  PhD thesis, (2014), available at http://www.math.u-szeged.hu/phd/dreposit/phdtheses/benyi-beata-d.pdf.

\bibitem{BH1} B.~B\'{e}nyi and P.~Hajnal, Combinatorics of poly-Bernoulli numbers. Studia Sci. Math. Hungarica \textbf{52} (2015), 537--558.

\bibitem{BH} B.~B\'{e}nyi and P.~Hajnal, Combinatorial properties of poly-Bernoulli relatives, Integers \textbf{17} (2017), A31, 1--26.

\bibitem{Nagy} B.~B\'{e}nyi and G.~V.~Nagy, Bijective enumerations of $\Gamma$-free matrices, Adv. in Appl. Math. \textbf{96} (2018) 195--215.

\bibitem{BenRam2} B.~B\'{e}nyi, M.~M\'endez,  J.~L.~Ram\'irez, and T.~Wakhare. Restricted $r$-Stirling numbers and their combinatorial applications. Appl. Math. Comput. \textbf{348} (2019), 186--205.

\bibitem{BenRam} B.~B\'{e}nyi and J.~L.~Ram\'irez,  Some applications of $S$-restricted set partitions, Period. Math. Hungar. \textbf{78} (2019),  110--127.

\bibitem{Brewbaker} C.~Brewbaker, A combinatorial interpretation of the poly-Bernoulli numbers and two Fermat analogues, Integers \textbf{8} (2008)  \# A02, 1--9.

\bibitem{Carlitz} L.~C.~Carlitz. $q$-Bernoulli numbers and polynomials. Duke Math. J. \textbf{15} (1948), 987--1000.

\bibitem{Cenkci} M.~Cenkci and T.~Komatsu, Poly-Bernoulli numbers and polynomials with a $q$-parameter, J. Number Theory  \textbf{152} (2015), 38--54.

\bibitem{Cigler} J.~Cigler. A new $q$-analogue of Stirling numbers. Sitzunber. Abt. II \textbf{201} (1992), 97--109.

\bibitem{Corteel2} S.~Corteel. Crossings and alignments of permutations. Adv. in Appl. Math. \textbf{38} (2007) 149--163.


\bibitem{Cortee1} S.~Corteel and L.~Williams, Tableaux combinatorics for the asymmetric exclusion process, Adv. in Appl. Math. \textbf{39} (2007), 293--310.




\bibitem{Ernst} T.~Ernst. \emph{A Comprehensive Treatment of $q$-Calculus},  Birkh\"auser, Springer, 2012.

\bibitem{Garsia}  A.~M.~Garsia and J.~B.~Remmel. Q-counting rook configurations and a formula of Frobenius. J. Combin. Theory Ser. A \textbf{41} (1986), 246--275.

\bibitem{Kaneko} M.~Kaneko, Poly-Bernoulli numbers, J. Th\'eor. Nombres Bordeaux \textbf{9} (1997) 221--228.


\bibitem{KanekoAkiyama} M.~Kaneko, The Akiyama-Tanigawa algorithm for Bernoulli numbers, J. Integer Seq. \textbf{3} (2000), Article 00.2.9.

\bibitem{Komatsu} T.~Komatsu, Poly-Cauchy numbers. Kyushu J. Math. \textbf{67} (2013), 143--153.


\bibitem{KomLipMez} T.~Komatsu, K.~Liptai, and  I.~Mez\H{o}, Incomplete poly-Bernoulli numbers associated with incomplete Stirling
numbers. Publ. Math. Debrecen \textbf{88} (2016), 357--368.


\bibitem{KomRam} T.~Komatsu and J.~L.~Ram\'irez, Generalized poly-Cauchy and poly-Bernoulli numbers by using
incomplete $r$-Stirling numbers. Aequat. Math. \textbf{91} (2017), 1055--1071.


\bibitem{Kim} H.~K.~Kim, D.~S.~Krotov,  and J.~Y.~Lee, Poly-Bernoulli numbers and lonesum matrices, http://arxiv.org/abs/1103.4884.

\bibitem{Launois} S.~Launois, Combinatoric of $\mathcal{H}$-primes in quantum matrices, J. Algebra \textbf{309} (2007), 139--167.

\bibitem{Mansour1}
T.~Mansour. {\em Combinatorics of set partitions}. CRC Press, 2012.

\bibitem{Mansour2}
T.~Mansour and M.~Schork.  {\em Commutations {R}elations, {N}ormal {O}rdering, and {S}tirling numbers}. CRC Press, 2015.

\bibitem{Merlini} D.~Merlini, R.~Sprugnoli, and M.~C.~Verri, The Akiyama-Tanigawa transformation, Integers \textbf{5} (2005) \# A05.

\bibitem{Ohno} Y.~Ohno and Y.~Sasaki, On poly-Euler numbers. J. Aust. Math. Soc. \textbf{103} (2017), 126--144.

\bibitem{Sjostrand} J.~ Sj\"{o}strand, Bruhat intervals as rooks on skew Ferrers boards, J. Combin. Theory Ser. A \textbf{114} (2007)
1182--1198.

\bibitem{OEIS} N.~J.~A. Sloane, The on-line encyclopedia of integer sequences, http://oeis.org.

\bibitem{Son} J.-W.~Son and M.-S.~Kim, On poly-Eulerian numbers, Bull. Korean Math. Soc. \textbf{36} (1999),  47--61.

\bibitem{Stein-Wil} E.~Steingr\'imsson and L. Williams, Permutation tableaux and permutation patterns, J. Combin. Theory Ser. A \textbf{114} (2007), 211--234.


\bibitem{Vesztergombi} K.~Vesztergombi,  Permutations with restriction of middle strength, Studia Sci. Math. Hungarica (1974) \textbf{9}, 181--185.

\bibitem{Zeng} J.~Zeng, The Akiyama-Tanigawa algorithm for Carlitz's $q$-Bernoulli numbers, Integers \textbf{6} (2006) \# A05.

\end{thebibliography}
\end{document}